
\documentclass[letterpaper, 10 pt, conference]{ieeeconf}  
\pdfminorversion=4
\IEEEoverridecommandlockouts                              
\overrideIEEEmargins

\usepackage{graphicx} 

\usepackage{amsfonts,amsmath,amssymb} 

\usepackage{array}
\usepackage{subcaption}

\graphicspath{{Figures/}}

\usepackage[utf8]{inputenc}
\usepackage[T1]{fontenc}
\usepackage[english]{babel} 
\usepackage{cite}

\interdisplaylinepenalty=0
\usepackage{amsmath}
\interdisplaylinepenalty=2500
\usepackage{amssymb}
\usepackage{eucal}

\usepackage{tikz}
\usepackage{pgfplots}
\pgfplotsset{compat=newest}
\pgfplotsset{plot coordinates/math parser=false}
\tikzstyle{dotted} = [line width = 0.5pt, dash pattern = on \pgflinewidth off 2\pgflinewidth]
\pgfplotsset{%
	tick label style={font=\scriptsize,%
		/pgf/number format/precision=3,%
		/pgf/number format/std=-3:3},%
	legend style={font=\small},%
	label style={font=\small},%
	every axis post/.append style={label style={font=\small},%
		axis line style=-,%
		scaled ticks=false}%
}

\newlength{\figurewidth}
\newlength{\figureheight}

\newtheorem{thm}{Theorem}
\newtheorem{crl}{Corollary}
\newtheorem{prop}{Proposition}
\newtheorem{lem}{Lemma}
\newtheorem{rmk}{Remark}

\providecommand{\abs}[1]{\left\lvert#1\right\rvert}
\providecommand{\norm}[1]{\left\lVert#1\right\rVert}

\newcommand{\be}{\begin{equation}}
\newcommand{\ee}{\end{equation}}
\newcommand{\beno}{\begin{equation*}}
\newcommand{\eeno}{\end{equation*}}
\newcommand{\barCl}{\begin{IEEEeqnarray}{rCl}}
\newcommand{\earCl}{\end{IEEEeqnarray}}
\newcommand{\barClno}{\begin{IEEEeqnarray*}{rCl}}
\newcommand{\earClno}{\end{IEEEeqnarray*}}
\newcommand{\ans}{\IEEEeqnarraynumspace}
\newcommand{\ase}{\IEEEyessubnumber}

\makeatletter
\newcommand{\my@opt@exp}[2]{%
	\ifx\relax#2\relax%
		#1%
	\else%
		{#1}^{#2}%
	\fi%
}
\newcommand{\my@opt@sub}[2]{%
	\ifx\relax#2\relax%
	#1%
	\else%
	{#1}_{#2}%
	\fi%
}
\newcommand{\Reals}[1][]{\my@opt@exp{\mathbb{R}}{#1}}
\newcommand{\eps}[1][]{\my@opt@exp{\varepsilon}{#1}}
\newcommand{\y}[1][]{\my@opt@exp{y}{#1}}
\newcommand{\ff}[1][]{\my@opt@exp{f}{#1}}			
\newcommand{\favg}[1][]{\my@opt@exp{g}{#1}}			
\newcommand{\fa}{f}			
\newcommand{\fb}{g}			
\newcommand{\fc}{h}			
\newcommand{\ca}{a}	
\newcommand{\cb}{\alpha}		
\newcommand{\ste}[1][]{\my@opt@sub{x}{#1}}		
\newcommand{\str}[1][]{\my@opt@sub{\zeta}{#1}}	
\newcommand{\st}[1][]{\my@opt@exp{x}{#1}}		
\newcommand{\cst}{\eta}							
\newcommand{\stf}{X}							
\newcommand{\stavg}[1][]{\my@opt@exp{y}{#1}}	
\newcommand{\sttrc}[1][]{\my@opt@exp{z}{#1}}	
\newcommand{\stapp}[1][]{\my@opt@exp{\xi}{#1}}	
\newcommand{\slt}{t}
\newcommand{\fst}{\sigma}
\newcommand{\psid}[1][]{W}									
\newcommand{\psidres}[1][]{\my@opt@sub{K}{#1}}
\newcommand{\go}[1][]{\my@opt@sub{l}{#1}} 
\newcommand{\gc}[1][]{\my@opt@sub{k}{#1}} 
\newcommand{\mes}[1][]{\my@opt@exp{y}{#1}}	
\makeatother




\newcommand{\cmd}{u}

\newcommand{\noise}{\nu}
\newcommand{\nmes}{y}		
\newcommand{\vmes}{y_v} 	
\newcommand{\cout}{u}		
\newcommand{\zero}[1]{{#1}_0}

\newcommand{\finj}{s}			
\newcommand{\Finj}{S}			
\newcommand{\FFinj}{\mathcal{S}}
\newcommand{\hf}[1]{\widetilde{#1}}
\newcommand{\lf}[1]{\overline{#1}}

\newcommand{\FSderive}[1]{\frac{d#1}{d\fst}}
\newcommand{\slderive}[1]{\dot{#1}}

\newcommand{\pderive}[2]{\frac{\partial#1}{\partial#2}}

\DeclareMathOperator{\Landau}{\mathcal{O_\infty}}
\newcommand{\Exp}[1]{\exp(#1)}

\DeclareMathOperator{\sinc}{sinc}
\newcommand{\Sinc}[1]{\sinc(#1)}

\DeclareMathOperator{\psd}{PSD}
\newcommand{\Psd}[1]{\psd[#1]}

\newcommand{\avg}[2]{
	\ifx!#2\relax
		\frac{1}{#1}\int^{#1}_{0}
	\else
		\ifx/#2\relax
			\frac{1}{#1}\int^{\frac{#1}{2}}_{-\frac{#1}{2}}
		\else
			\frac{1}{#1}\int^{#2}_{#2 - #1}
		\fi
	\fi
}
\newcommand{\Avg}[2]{
	\ifx!#2\relax
		\frac{1}{#1}\int\limits^{#1}_{0}
	\else
		\ifx/#2\relax
			\frac{1}{#1}\int\limits^{\frac{#1}{2}}_{-\frac{#1}{2}}
		\else
			\frac{1}{#1}\int\limits^{#2}_{#2 - #1}
		\fi
	\fi
}
\newcommand{\res}[2]{R}

\newcommand{\rf}[1]{#1^{ref}}
\newcommand{\equ}[1]{#1^{eq}}
\newcommand{\obs}[1]{\widehat{#1}}
\newcommand{\est}[1]{\obs{#1}}

\title{\LARGE \bf Adding virtual measurements by signal injection}
\author{Pascal Combes\textsuperscript{1,2}, Al Kassem Jebai\textsuperscript{2}, François Malrait\textsuperscript{2}, Philippe Martin\textsuperscript{1} and Pierre Rouchon\textsuperscript{1}
\thanks{P. Martin is partially supported by the French Agence Nationale de la Recherche through the ANR ASTRID SCAR project ``Sensory Control of
Aerial Robots'' (ANR-12-ASTR-0033)}
\thanks{\textsuperscript{1}~P.~Combes, P.~Martin and P.~Rouchon are with the Centre Automatique et Systèmes, MINES ParisTech, PSL Research University, Paris, France
{\tt\footnotesize\{philippe.martin,pierre.rouchon\}@mines-paristech.fr}}%
\thanks{\textsuperscript{2}~P.~Combes, A.-K.~Jebai and F.~Malrait are with Schneider Toshiba Inverter Europe, 27120~Pacy-sur-Eure,~France
{\tt\footnotesize \{pascal.combes,al-kas sem.jebai,francois.malrait\}@schneider-electric.com}}
}

\begin{document}

\maketitle
\thispagestyle{empty}
\pagestyle{empty}

\begin{abstract}
We propose a method to ``create'' a new measurement output by exciting the system with a high-frequency oscillation. This new ``virtual'' measurement may be useful to facilitate the design of a suitable control law. The approach is especially interesting when the observability from the actual output degenerates at a steady-state regime of interest. The proposed method is based on second-order averaging and is illustrated by simulations on a simple third-order system. 
\end{abstract}

\section{Introduction}
The difficulty in designing a control law is often largely due to the properties of the (measured) output. For instance, controlling a system around a point where observability degenerates may be really challenging, even though the dynamics itself may remain simple. In this paper, we introduce and formalize a method able to produce an extra ``virtual'' output by exciting the system with a high-frequency oscillation. With the help of this new output, the design of the control law is likely to be much easier.

More precisely, consider the Single-Input Single-Output system
		\barCl\label{eqn:system}
			\slderive{\st} &=& \fa(\st) + \fb(\st)\cmd \ase\label{eq:state} \\
			\mes &=& \fc(\st) \ase\label{eq:output},
		\earCl
where $(\st,\cmd,\mes)$ belong to some compact subset of $\Reals[n]\times\Reals\times\Reals$; $\fa$, $\fb$ and $\fc$ are smooth enough maps (e.g. at least $\mathcal C^2$). 
We show that by superimposing to the control~$\cmd$ a fast-varying periodic signal with (small) period~$\eps$, we may consider that the so-called virtual output 
		\be
			\vmes := L_gh(\st)=\fc'(\st) \fb(\st)\label{eq:virtual}
		\ee
becomes available, and can therefore be used for designing a control law; on the other hand, the overall effect of the excitation on the original system~\eqref{eqn:system} remains small and can be ignored. Notice this a purely nonlinear phenomenon: nothing is gained when both $\fb$ and $\fc$ are linear with respect to~$\st$. Notice also the method is not restricted to affine SISO systems; it can be generalized to general MIMO systems along the same lines, at the cost of only added technicalities.

The idea of recovering some extra information through the injection of a high-frequency signal is not completely new. It has even become a standard method for the ``sensorless'' control of electrical motors at low velocity, since its introduction by~\cite{JanseL1995ITIA,CorleL1998ITIA}~(``sensorless'' here means that only the currents are measured, but neither the rotor position nor its velocity). Nevertheless the essence of the method is buried under the many technicalities and peculiarities of control of electrical motors, and its analysis is usually rather heuristic. A more rigorous but still electrical-motor-oriented analysis is proposed in~\cite{JebaiMMR2012ICDC,JebaiMMR2015IJoC}, based on second-order averaging; the present paper is a conceptualization and generalization of those ideas.

The paper is organized as follows: section~\ref{sec:averaging} lays the basis for the method; it analyzes by second-order averaging how the virtual output is created.  Section~\ref{sec:demod} shows that the virtual output can effectively be recovered by a kind of heterodyning procedure, and how it is debased by measurement noise. Finally, section~\ref{sec:simu} illustrates the interest of the method on a simple academic example.
\section{Signal injection and second-order averaging}\label{sec:averaging}
Consider the system~\eqref{eq:state} with the two outputs \eqref{eq:output}-\eqref{eq:virtual}. Assume we have designed a suitable control law
		\barCl\label{eqn:controller}
			\cout &=& \cb(\cst, \mes, \vmes, \slt) \ase \\
			\slderive{\cst} &=& \ca(\cst, \mes, \vmes, \slt) \ase,
		\earCl
with $\cst\in\Reals[p]$. By ``suitable'', we mean the closed-loop system
		\barCl\label{eqn:system:closed-loop}
		\slderive{\lf{\st}} &=& \fa(\lf{\st}) + \fb\bigl(\lf{\st})\cb(\lf{\cst}, h(\lf{\st}), L_gh(\lf{\st}), \slt\bigr) \ase \\
		\slderive{\lf{\cst}} &=& \ca\bigl(\lf{\cst}, h(\lf{\st}), L_gh(\lf{\st}), \slt\bigr) \ase
		\earCl
has the desired exponentially stable equilibrium point (or family of equilibrium points); we have changed the notation of the state to~$(\lf\st,\lf\cst)$, so as to distinguish between the solutions of~\eqref{eqn:system:closed-loop} and those of~\eqref{eqn:system:closed-loop:hf} below.

Consider now the modified control law
		\barCl\label{eqn:controllerModified}
		\cout &=&  \cb(\cst, \lf{\mes}, \lf{\vmes}, \slt)+ \finj\Bigl(\frac{\slt}{\eps}\Bigr) \ase \\
		\slderive{\cst} &=& \ca(\cst, \lf{\mes}, \lf{\vmes}, \slt) \ase\\
		\lf{\mes} &=& h(\st)+\eps\kappa\Bigl(\st,\frac{\slt}{\eps}\Bigr)+\Landau(\eps^2) \ase\label{eqn:controllerModifiedy}\\
		\lf{\vmes} &=& L_gh(\st)+\eps\kappa_v\Bigl(\st,\frac{\slt}{\eps}\Bigr)+\Landau(\eps^2) \ase\label{eqn:controllerModifiedyv}
		\earCl
where $\finj$ is a periodic function with period~$1$ and zero mean, i.e. $\int_0^1s(\fst)d\fst=0$; $\kappa$ and $\kappa_v$ are periodic with period~$1$ with respect to their second arguments and zero mean, i.e
\beno
\int_0^1\kappa(\st,\fst)d\fst=\int_0^1\kappa_v(\st,\fst)d\fst=0;
\eeno
$\eps$ is a ``small'' parameter, so that the signal superimposed to the base control law is fast-varying; $\Landau$ is the ``uniform big O'' symbol of analysis, i.e. we write $k(z,\eps)=\Landau(\eps)$ to mean $\norm{k(z,\eps)}\leq C\eps$ for some constant $C$ independent of $z$ and~$\eps$. The closed-loop system then reads
		\barCl\label{eqn:system:closed-loop:hf}
		\slderive{\st} &=& \fa(\st) + \fb(\st)\cb(\cst, \lf{\mes}, \lf{\vmes}, \slt) + \fb(\st)\finj\Bigl(\frac{\slt}{\eps}\Bigr)\ans \ase \\
		\slderive{\cst} &=& \ca(\cst, \lf{\mes}, \lf{\vmes}, \slt) \ase \\
		\lf{\mes} &=& h(\st)+\eps\kappa\Bigl(\st,\frac{\slt}{\eps}\Bigr)+\Landau(\eps^2) \ase\\
		\lf{\vmes} &=& L_gh(\st)+\eps\kappa_v\Bigl(\st,\frac{\slt}{\eps}\Bigr)+\Landau(\eps^2) .\ase
		\earCl
The goal of injecting a fast-varying oscillation is to ``create'' the virtual output~$\vmes$. We will see in corollary~\ref{cor:principle} how to choose $\kappa$ and $\kappa_v$ so that $\lf{\mes}$ and $\lf{\vmes}$ corresponds to actually available signals.
		
The following theorem characterizes the effect of signal injection by comparing the solutions of \eqref{eqn:system:closed-loop} and~\eqref{eqn:system:closed-loop:hf}.
		\begin{thm}
			\label{thm:main}
			Let $\bigl(\lf{\st}(\slt), \lf{\cst}(\slt)\bigr)$ and $\bigl(\st(\slt), \cst(\slt)\bigr)$ be the solution of the closed-loop systems \eqref{eqn:system:closed-loop} and~\eqref{eqn:system:closed-loop:hf} respectively, starting from the same initial condition. Then for all $\slt \ge 0$,
			\barCl
				\st(\slt) &=& \lf{\st}(\slt) + \eps \fb\bigl(\lf{\st}(\slt)\bigr)  \Finj\Bigl(\frac{\slt}{\eps}\Bigr) + \Landau(\eps^2)\ase \label{eq:thmx}\\
				\cst(\slt) &=& \lf{\cst}(\slt) + \Landau(\eps^2)\ase \label{eq:thmeta}\\
				\mes(\slt) &=& h\bigl(\lf{\st}(\slt)\bigr) + \eps L_gh\bigl(\lf{\st}(\slt)\bigr) \Finj\Bigl(\frac{\slt}{\eps}\Bigr) + \Landau\left(\eps[2]\right),\ase\label{eq:thmy}
			\earCl	
where $S$ is the (of course also $1$-periodic) primitive of~$s$ with zero mean, i.e.
			\barClno
			S(\fst) &:=& \int_0^\fst s(\tau)d\tau-\int_0^1\int_0^\mu s(\tau)d\tau d\mu.
			\earClno		
		\end{thm}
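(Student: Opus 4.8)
The plan is to apply a second-order averaging theorem to the fast-oscillating closed-loop system \eqref{eqn:system:closed-loop:hf} and compare its solution to that of the averaged system, which I expect to coincide (up to the relevant order) with the base closed-loop system \eqref{eqn:system:closed-loop}. The natural first step is to put \eqref{eqn:system:closed-loop:hf} into the standard form required by averaging. The state $\st$ obeys $\slderive{\st}=\fa(\st)+\fb(\st)\cb(\cdots)+\fb(\st)\finj(\slt/\eps)$; the term $\fb(\st)\finj(\slt/\eps)$ is an $\Landau(1)$ oscillation, so the equation is not yet in the $\slderive{\st}=\eps F(\st,\slt/\eps)$ slow form. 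I would therefore introduce the near-identity change of variables suggested by the theorem's conclusion, namely
\beno
\st = \stp + \eps\fb(\stp)\Finj\Bigl(\frac{\slt}{\eps}\Bigr),
\eeno
where $\Finj=\Finj(\fst)$ is the chosen zero-mean primitive of $\finj$. Because $\FSderive{\Finj}=\finj$, differentiating this relation produces a term $\fb(\stp)\finj(\slt/\eps)$ that exactly cancels the $\Landau(1)$ oscillatory forcing on $\st$, leaving a residual dynamics for $\stp$ that is $\fa(\stp)+\fb(\stp)\cb(\cdots)$ plus $\Landau(\eps)$ oscillatory terms.

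The second step is to compute that residual carefully to order $\eps$. Expanding $\fa$, $\fb$ and $\fc$ around $\stp$ using their $\mathcal C^2$ regularity, and substituting the expansions \eqref{eqn:controllerModifiedy}--\eqref{eqn:controllerModifiedyv} for $\lf\mes$ and $\lf\vmes$, I would collect the $\Landau(\eps)$ terms. The key structural fact to exploit is that the $\eps$-order correction to $\st$ is $\fb(\stp)\Finj(\slt/\eps)$, so the induced variation in $\fc$ is $\fc'(\stp)\fb(\stp)\Finj=L_gh(\stp)\Finj$ — this is precisely the Lie-derivative combination appearing in \eqref{eq:thmy}, and it is what makes the virtual output detectable. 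I would then verify that the oscillatory $\Landau(\eps)$ terms in the $\stp$-equation average to zero over one period (using the zero-mean hypotheses on $\finj$, $\kappa$, $\kappa_v$, and hence on $\Finj$), so that the averaged drift for $\stp$ matches the right-hand side of \eqref{eqn:system:closed-loop}. The same computation applied to the controller state $\cst$, whose forcing depends on $\st$ only through $\lf\mes,\lf\vmes$, shows that $\cst$ receives no $\Landau(1)$ oscillation at all and differs from $\lf\cst$ only at order $\eps^2$, giving \eqref{eq:thmeta}.

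The third step invokes the standard second-order averaging estimate: on the compact set where the solutions live, and given that the base closed-loop system \eqref{eqn:system:closed-loop} has an exponentially stable equilibrium (or family thereof), the averaging theorem guarantees that $\stp(\slt)=\lf\st(\slt)+\Landau(\eps^2)$ uniformly for all $\slt\ge 0$, not merely on the $\Landau(1/\eps)$ time scale. Translating back through the change of variables $\st=\stp+\eps\fb(\stp)\Finj(\slt/\eps)$ and replacing $\stp$ by $\lf\st$ inside the $\Landau(\eps)$ term (an error of order $\eps^2$) yields \eqref{eq:thmx}. Finally, substituting \eqref{eq:thmx} into $\mes=\fc(\st)$ and Taylor-expanding gives $\mes=\fc(\lf\st)+\eps\fc'(\lf\st)\fb(\lf\st)\Finj(\slt/\eps)+\Landau(\eps^2)$, which is \eqref{eq:thmy} since $\fc'\fb=L_gh$.

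The main obstacle I anticipate is justifying the uniform-in-time $\Landau(\eps^2)$ bound on $[0,\infty)$ rather than only on a finite averaging horizon. First-order averaging over a bounded interval is routine, but the conclusion here holds for \emph{all} $\slt\ge 0$, which genuinely requires the exponential stability of the averaged (base closed-loop) system to absorb the accumulated error — this is the hypothesis hidden in the word ``suitable'' preceding \eqref{eqn:system:closed-loop}. The careful bookkeeping of the two $\Landau(\eps^2)$ remainder terms in \eqref{eqn:controllerModifiedy}--\eqref{eqn:controllerModifiedyv} through the Taylor expansions, and checking that they do not corrupt the $\eps$-order coefficients, is the secondary technical point that must be handled with care.
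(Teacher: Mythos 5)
Your overall route is essentially the paper's: the ``near-identity change of variables'' $\st = \stp + \eps\,\fb(\stp)\Finj\bigl(\frac{\slt}{\eps}\bigr)$ you introduce is exactly the first-order correction $W(\lf\stf,\eps\fst,\fst)=\hf F_1(\lf\stf)\Finj(\fst)$ that the paper obtains by passing to the fast time $\fst=\frac{\slt}{\eps}$ and invoking the Sanders--Verhulst second-order averaging theorem with slow time dependence; likewise your zero-mean verifications (including that $\Finj\finj=\tfrac12\fsderive{(\Finj^2)}$ averages to zero, and that $\kappa,\kappa_v$ contribute nothing) are precisely the paper's computation that $G_2=0$, so that the averaged system coincides with \eqref{eqn:system:closed-loop}. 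One mechanical remark: your substitution by itself does not bring \eqref{eqn:system:closed-loop:hf} into the slow form $\slderive{\stp}=\eps F(\stp,\slt/\eps)$ --- after the cancellation the drift $\fa(\stp)+\fb(\stp)\cb(\cdot)$ is still $\Landau(1)$ --- so you must still rescale to the fast time before any averaging theorem applies; the paper does this rescaling at the outset and lets the theorem's built-in transformation do what you do by hand.

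The genuine gap is in your third step. You invoke ``the standard second-order averaging estimate'' to conclude $\stp(\slt)=\lf\st(\slt)+\Landau(\eps^2)$ uniformly for all $\slt\ge 0$ under exponential stability of \eqref{eqn:system:closed-loop}. No such off-the-shelf result exists: the standard theorems give the $\Landau(\eps^2)$ estimate only on the time scale $\frac{1}{\eps}$ in the fast time (i.e.\ on compact slow-time intervals), and the known infinite-time extension under exponential stability (Sanders--Verhulst, Theorem 5.5.1) covers \emph{first}-order averaging only. The paper explicitly notes that the second-order version ``does not seem to exist in the literature'' and proves it as lemma~\ref{thm:lem1} in the appendix, by partitioning $[\zero{\slt},\infty)$ into intervals of length $\frac{L}{\eps}$, applying the finite-time second-order estimate on each, and exploiting the exponential contraction of the averaged flow to run a geometric recursion that keeps the accumulated error at $\Landau(\eps^2)$. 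You correctly single out this uniform-in-time bound as the main obstacle and name the right mechanism (stability absorbing the accumulated error), but you treat it as citable rather than as something to prove; as written, your argument only yields \eqref{eq:thmx}--\eqref{eq:thmy} on bounded time intervals, not for all $\slt\ge0$. A smaller omission: with your change of variables, having $\st$ and $\lf\st$ start from the \emph{same} initial condition forces $\Finj(0)=0$, which the paper secures by assuming without loss of generality $S(0)=0$ (a time shift of $\finj$); without this normalization your transformed initial condition is displaced by $\eps\fb\Finj(0)$.
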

In other words, signal injection
\begin{itemize}
\item has a small effect of order~$\eps$ on the state variables directly affected by the input
\item has a very small effect of order~$\eps^2$ on the state variables not directly affected by the input. In many systems of interest, the input affects directly only one state variable whereas the control objective is a combination of the other stable variables; the control objective is thus hardly affected by the high-frequency excitation. This is the case for instance in electrical motors: only the fluxes (or currents) are directly affected by the inputs, while the goal is to control the rotor velocity (or position)
\item creates a small ``ripple'' of order~$\eps$ in the measured output. The amplitude of this ripple is precisely the virtual output. A procedure to extract $h\bigl(\lf{\st}(\slt)\bigr)$ and~$L_gh\bigl(\lf{\st}(\slt)\bigr)$ from~$\mes(\slt)$ is presented in section~\ref{sec:demod}; using these signals in the control law~\eqref{eqn:controllerModified} amounts to a particular choice of $\kappa$ and~$\kappa_v$, see corollary~\ref{cor:principle}.
\end{itemize}
\begin{proof}
The proof is a direct application of second-order averaging for differential equations~\cite[section 2.9]{Sanders2005}, with slow time dependence \cite[section 3.3]{Sanders2005}. We first recall the main result of this theory, and then apply it to our case.

Consider the perturbed system
\beno
\FSderive{\stf} = \eps F_1(\stf,\eps\fst,\fst)+\eps^2F_2(\stf,\eps\fst,\fst)+\Landau(\eps^3)
\eeno
with initial condition $X(0):=\lf\stf_0+\eps W(\lf\stf_0,0,0)$, where $F_1$ and $F_2$ are $T$-periodic with respect to their third arguments, and the averaged system
\be\label{eq:averagedSys}
\FSderive{\lf\stf} = \eps G_1(\lf\stf,\eps\fst)+\eps^2G_2(\lf\stf,\eps\fst)
\ee
with initial condition $\lf\stf(0):=\lf\stf_0$; finally, $G_1$, $W$ and~$G_2$ are defined by
			\barClno
			G_1(\lf\stf,\eps\fst) &:=& \frac{1}{T}\int_0^TF_1(\lf{\stf},\eps\fst,\tau)d\tau\\
			w(\lf\stf,\eps\fst,\fst) &:=& \int_0^\fst\big[F_1(\lf{\stf},\eps\fst,\tau)-G_1(\lf{\stf},\eps\fst)\bigr]d\tau\\
			W(\lf\stf,\eps\fst,\fst) &:=& w(\lf\stf,\eps\fst,\fst) - \frac{1}{T}\int_0^Tw(\lf\stf,\eps\fst,\tau)d\tau\\			
			K_2(\lf\stf,\eps\fst,\fst) &:=& F_2(\lf\stf,\eps\fst,\fst)\\
			&&+\, \partial_1F_1(\lf\stf,\eps\fst,\fst)W(\lf\stf,\eps\fst,\fst)\\
			&&-\, \partial_1W(\lf\stf,\eps\fst,\fst)G_1(\lf\stf,\eps\fst,\fst)\\
			G_2(\lf\stf,\eps\fst) &:=& \frac{1}{T}\int_0^TK_2(\lf\stf,\eps\fst,\tau)d\tau,
			\earClno
where $\partial_k$ denotes the partial derivative with respect to the $k^{th}$ argument.
The theory of second-order averaging then asserts that the solution $\stf(\fst)$ of the perturbed system and the solution $\lf\stf(\fst)$ of the perturbed system are related by
			\barClno
			\stf(\fst) &=& \lf\stf(\fst) + \eps W(\lf\stf,\eps\fst,\fst) +\Landau(\eps[2])
			\earClno
on the timescale~$\frac{1}{\eps}$. If moreover the averaged system has an exponentially stable equilibrium point with region of attraction~$D$, and if the initial condition~$\lf\stf_0$ belongs to a compact subset of~$D$, then this estimation can be continued to infinity by lemma~\ref{thm:lem1} in the appendix.

To apply this result to our case, we define $\stf := (\st, \cst)$ and rewrite~\eqref{eqn:system:closed-loop:hf} in the fast time $\fst := \frac{\slt}{\eps}$. This yields
			\beno
			\FSderive{\stf} = \eps\bigl(\lf F_1(\stf, \eps\fst) + \hf F_1(\stf)\finj(\fst)\bigr)
			+\eps^2F_2(\stf, \eps\fst,\fst) +\Landau(\eps^3),
			\eeno
where
			\barClno
			\lf F_1(\stf, \eps\fst) &:=& \begin{pmatrix}
				\fa(\st) + \fb(\st)\cb(\cdot)\\
				\ca(\cdot)
			\end{pmatrix} \\
			\hf F_1(\stf) &:=& \begin{pmatrix}
				\fb(\st) \\
				0
			\end{pmatrix} \\
			F_2(\stf, \eps\fst,\fst) &:=& \begin{pmatrix}
			\fb(\st)\bigl(\partial_2\cb(\cdot)\kappa(\st,\fst)+\partial_3\cb(\cdot)\kappa_v(\st,\fst)\bigr)\\
			\partial_2\ca(\cdot)\kappa(\st,\fst)+\partial_3\ca(\cdot)\kappa_v(\st,\fst)
			\end{pmatrix};
			\earClno
in the expressions above, $(\cdot)$ denotes $\bigl(\cst,h(\st),L_gh(\st),\eps\fst\bigr)$. We then find
			\barClno
			G_1(\lf\stf,\eps\fst) &=& \lf F_1(\lf\stf, \eps\fst)\\
			w(\lf\stf,\eps\fst,\fst) &=& \hf F_1(\lf\stf)\int_0^\fst s(\tau)d\tau\\
			W(\lf\stf,\eps\fst,\fst) &=& \hf F_1(\lf\stf)S(\fst)\\			
			K_2(\lf\stf,\eps\fst,\fst) &=& F_2(\lf\stf,\eps\fst,\fst) + \partial_1\lf F_1(\lf\stf,\eps\fst)\hf F_1(\lf\stf)S(\fst)\\
			&&-\, \partial_1\hf F_1(\lf\stf)\lf F_1(\lf\stf,\eps\fst)S(\fst)\\
			&&+\, \frac{1}{2}\partial_1\hf F_1(\lf\stf)\hf F_1(\lf\stf)\FSderive{S^2}(\fst)\\
			G_2(\lf\stf,\eps\fst) &=& 0,
			\earClno
remembering that $s$, $S$, $\kappa$ and $\kappa_v$ have zero mean. We then rewrite the averaged system~\eqref{eq:averagedSys} in the slow time~$\slt$ to find
\beno
\dot{\lf\stf} = G_1(\lf\stf,\slt)+\eps G_2(\lf\stf,\slt) = \lf F_1(\lf\stf,\slt),
\eeno
which is exactly the closed-loop system~\eqref{eqn:system:closed-loop}. Moreover, $\stf$ and $\lf\stf$ are related by
			\beno
				\stf(\slt) = \lf{\stf}(\slt) + \eps\hf F_1\bigl(\lf{\stf}(\slt)\bigr) \Finj\Bigl(\frac{\slt}{\eps}\Bigr) + \Landau(\eps[2]),	
			\eeno
which is~\eqref{eq:thmx}-\eqref{eq:thmeta}. Finally, we get~\eqref{eq:thmy} by injecting~\eqref{eq:thmx} in the expression of the output,
			\barClno
				\mes(t) &=& \fc\bigl(\st(t)\bigr)\\
				&=& \fc\biggl(\lf{\st}(t) + \eps\fb\bigl(\lf\st(t)\bigr)\Finj\Bigl(\frac{\slt}{\eps}\Bigr) + \Landau(\eps[2])\biggr) \\
				&=& \fc\bigl(\lf\st(t)\bigr) + \eps L_gh\bigl(\lf\st(t)\bigr)\Finj\Bigl(\frac{\slt}{\eps}\Bigr) + \Landau(\eps[2]).
			\earClno
We have assumed without loss of generality that $S(0)=0$, which implies $\stf(0)=\lf\stf(0)$; this is always possible by suitably shifting in time the signal~$s$.
		\end{proof}

\begin{crl}\label{cor:principle}
Assume the signals $h\bigl(\lf{\st}(\slt)\bigr)$ and $L_gh\bigl(\lf{\st}(\slt)\bigr)$ in~\eqref{eq:thmy} are available. Then the contol law~\eqref{eqn:controllerModified} is actually implementable by choosing
			\barClno
				\kappa\Bigl(\st,\frac{\slt}{\eps}\Bigr) &:=& - L_gh(\st) \Finj\Bigl(\frac{\slt}{\eps}\Bigr)\\
				\kappa_v\Bigl(\st,\frac{\slt}{\eps}\Bigr) &:=& - L^2_gh(\st) \Finj\Bigl(\frac{\slt}{\eps}\Bigr).
			\earClno

\end{crl}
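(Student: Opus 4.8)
The plan is to check that, with the stated choice of $\kappa$ and $\kappa_v$, the controller inputs $\lf{\mes}$ and $\lf{\vmes}$ defined by~\eqref{eqn:controllerModifiedy}--\eqref{eqn:controllerModifiedyv} reduce, up to $\Landau(\eps^2)$, to the signals $h\bigl(\lf{\st}(\slt)\bigr)$ and $L_gh\bigl(\lf{\st}(\slt)\bigr)$ that are assumed available. Once this is established, one may feed the controller~\eqref{eqn:controllerModified} with these measured quantities in place of $\lf{\mes}$ and $\lf{\vmes}$; the substitution alters the right-hand sides only by $\Landau(\eps^2)$, so the control law is realizable from available data, which is precisely what ``implementable'' means here.

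For the measured output I would insert $\kappa\bigl(\st,\slt/\eps\bigr)=-L_gh(\st)\Finj\bigl(\slt/\eps\bigr)$ into~\eqref{eqn:controllerModifiedy}, giving $\lf{\mes}=h(\st)-\eps L_gh(\st)\Finj\bigl(\slt/\eps\bigr)+\Landau(\eps^2)$, and then substitute the output expansion~\eqref{eq:thmy} of Theorem~\ref{thm:main}, namely $h(\st)=h\bigl(\lf{\st}\bigr)+\eps L_gh\bigl(\lf{\st}\bigr)\Finj\bigl(\slt/\eps\bigr)+\Landau(\eps^2)$, together with $L_gh(\st)=L_gh\bigl(\lf{\st}\bigr)+\Landau(\eps)$. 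This last estimate follows from the state relation~\eqref{eq:thmx}, $\st=\lf{\st}+\eps\fb\bigl(\lf{\st}\bigr)\Finj\bigl(\slt/\eps\bigr)+\Landau(\eps^2)$, and the smoothness of $L_gh$ on the compact operating set. The two order-$\eps$ ripple terms then cancel, leaving $\lf{\mes}=h\bigl(\lf{\st}\bigr)+\Landau(\eps^2)$.

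The virtual output is treated in the same way, the only extra ingredient being a first-order Taylor expansion of $L_gh$ along the trajectory. Substituting~\eqref{eq:thmx} into $L_gh$ and using the chain-rule identity $(L_gh)'(\st)\fb(\st)=L^2_gh(\st)$ yields $L_gh(\st)=L_gh\bigl(\lf{\st}\bigr)+\eps L^2_gh\bigl(\lf{\st}\bigr)\Finj\bigl(\slt/\eps\bigr)+\Landau(\eps^2)$. Inserting $\kappa_v\bigl(\st,\slt/\eps\bigr)=-L^2_gh(\st)\Finj\bigl(\slt/\eps\bigr)$ into~\eqref{eqn:controllerModifiedyv} and cancelling the ripple exactly as above gives $\lf{\vmes}=L_gh\bigl(\lf{\st}\bigr)+\Landau(\eps^2)$.

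I expect no real obstacle, since the computation is just one substitution plus one Taylor expansion. Two points deserve a line of justification. First, the proposed $\kappa$ and $\kappa_v$ must remain admissible, i.e. of zero mean in their fast argument (the standing assumption on $\kappa$ and $\kappa_v$); this holds because $\Finj$ has zero mean and the state-dependent prefactors $L_gh(\st)$ and $L^2_gh(\st)$ factor out of the average over one period. Second, replacing $\lf{\st}$ by $\st$ inside the $\Landau(\eps)$ prefactors must be justified, which follows from~\eqref{eq:thmx} and uniform Lipschitz bounds on the relevant maps over the compact set to which $(\st,\cmd,\mes)$ belongs.
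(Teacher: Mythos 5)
Your proposal is correct and follows essentially the same route as the paper: both insert the state expansion~\eqref{eq:thmx} (equivalently the output expansion~\eqref{eq:thmy}), Taylor-expand $h$ and $L_gh$ about $\lf{\st}$ using the chain-rule identity for $L^2_gh$, and observe that the order-$\eps$ ripple terms cancel, leaving $\lf{\mes}=h(\lf\st)+\Landau(\eps^2)$ and $\lf{\vmes}=L_gh(\lf\st)+\Landau(\eps^2)$. Your two added remarks (the zero-mean admissibility of $\kappa$, $\kappa_v$ and the Lipschitz justification for swapping $\st$ and $\lf\st$ inside $\Landau(\eps)$ prefactors) are details the paper leaves implicit under ``we obviously have,'' so they strengthen rather than change the argument.
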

		\begin{proof} Using~\eqref{eq:thmx} we obviously have
					\barClno
					h(\st)-\eps L_gh(\st) \Finj\Bigl(\frac{\slt}{\eps}\Bigr) &=& 
					h\biggl(\lf{\st} + \eps\fb\bigl(\lf\st\bigr)\Finj\Bigl(\frac{\slt}{\eps}\Bigr) + \Landau(\eps[2])\biggr)\\
					&&-\,\eps L_gh\bigl(\lf{\st} + \Landau(\eps)\bigr)\Finj\Bigl(\frac{\slt}{\eps}\Bigr)\\
					&=& h(\lf\st) + \Landau(\eps^2)\\
					L_gh(\st)-\eps L^2_gh(\st) \Finj\Bigl(\frac{\slt}{\eps}\Bigr) &=& L_gh(\lf\st) + \Landau(\eps^2),
					\earClno
i.e. $\lf{\mes}=h(\lf\st)$ in \eqref{eqn:controllerModifiedy} and $\lf{\vmes}=L_gh(\lf\st)$ in \eqref{eqn:controllerModifiedyv}.
		\end{proof}
\section{Extracting the outputs}\label{sec:demod}
We now turn to extracting the information contained in~\eqref{eq:thmy}. In other words, given a signal of the form
	\beno\label{eqn:measurement}
		\nmes(\slt) = \lf{\mes}(\slt) + \eps\overline{y_v}(\slt)\Finj\Bigl(\frac{\slt}{\eps}\Bigr) + \noise(\slt), 
	\eeno
and corrupted by the measurement noise~$\nu$, we would like to recover its components $\lf{\mes}(\slt)$ and $\overline{y_v}(\slt)$. We will show this can be achieved by the estimators
	\barCl
		\est{\lf{\mes}}(\slt) &:=& \avg{n\eps}{\slt} \mes(\tau) d\tau \ase \label{eqn:demod:ybar} \\
		\est{\overline{y_v}}(\slt) &:=& \frac{1}{\eps} \frac{\avg{n\eps}{\slt} \Bigl(\mes\bigl(\tau - \frac{n\eps}{2}\bigr) - \est{\lf{\mes}}(\tau)\Bigr)\Finj\Bigl(\frac{\tau - \frac{n\eps}{2}}{\eps}\Bigr) d\tau}{\avg{n\eps}{\slt} \Finj^2(\frac{\tau}{\eps}) d\tau}, \ase\ans \label{eqn:demod:ytilde}
	\earCl
with $n\in\mathbb{N}$. We study the accuracy of these estimators without noise in~\ref{sec:demod:extract}, and their sensitivity to noise in~\ref{sec:demod:noise}. Indeed, since the noise is additive and enters the estimators linearly, the two issues can be studied independently.

	\subsection{Accuracy of the estimators}
	\label{sec:demod:extract}
		\begin{prop}
			The accuracy of the estimators \eqref{eqn:demod:ybar} and \eqref{eqn:demod:ytilde} is as follows
			\barCl
				\est{\lf{\mes}}(\slt) &=& \lf{\mes}\Bigl(\slt - \frac{n\eps}{2}\Bigr) + \Landau\bigl(n^2\eps^2\bigr) = \lf{\mes}(\slt) + \Landau(n\eps) \ans \ase \\
				\est{\overline{y_v}}(\slt) &=& \overline{y_v}(\slt) + \Landau(n^2\eps) \ase
			\earCl
		\end{prop}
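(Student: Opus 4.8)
Throughout I would set $\noise\equiv 0$, since the estimators are affine in the data and the noise contribution, entering linearly, is analyzed separately; thus I substitute the clean model $\mes(\slt)=\lf{\mes}(\slt)+\eps\overline{y_v}(\slt)\Finj(\slt/\eps)$ into \eqref{eqn:demod:ybar}--\eqref{eqn:demod:ytilde}. The whole argument rests on two elementary estimates that I would prove once and reuse. First, a \emph{midpoint rule}: for smooth $\phi$, Taylor-expanding about the window center and using that the linear term integrates to zero by symmetry gives $\avg{n\eps}{\slt}\phi(\tau)\,d\tau=\phi\bigl(\slt-\tfrac{n\eps}{2}\bigr)+\Landau(n^2\eps^2)$. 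Second, an \emph{oscillation estimate}: if $\psi$ is $1$-periodic with zero mean (so its primitive $\Psi$ is bounded and $1$-periodic), then integrating by parts over the window --- which spans exactly $n$ periods of $\psi(\cdot/\eps)$, so the boundary terms collapse --- leaves only the $\Landau(n\eps)$ variation of $\phi$ across the window scaled by one factor $\eps$, i.e. $\int_{\slt-n\eps}^{\slt}\phi(\tau)\psi(\tau/\eps)\,d\tau=\Landau(n\eps^2)$.

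For the first estimator, applying the midpoint rule to $\lf{\mes}$ and the oscillation estimate (with $\psi=\Finj$, whose zero mean is exactly the defining property of $S$) to the term $\eps\overline{y_v}\Finj(\cdot/\eps)$ gives $\est{\lf{\mes}}(\slt)=\lf{\mes}\bigl(\slt-\tfrac{n\eps}{2}\bigr)+\Landau(n^2\eps^2)$, the oscillating part contributing merely $\tfrac1n\Landau(n\eps^2)=\Landau(\eps^2)$; the second form then follows from $\lf{\mes}\bigl(\slt-\tfrac{n\eps}{2}\bigr)=\lf{\mes}(\slt)+\Landau(n\eps)$. For the second estimator I would first observe that its denominator is an \emph{exact} constant: since $\Finj^2$ is $1$-periodic and the window spans exactly $n$ periods, $\avg{n\eps}{\slt}\Finj^2(\tau/\eps)\,d\tau=\int_0^1\Finj^2=:\langle\Finj^2\rangle$ with no $\eps$-error whatsoever.

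The heart of the proof is the numerator, where the shift by $\tfrac{n\eps}{2}$ and the matched demodulation by $\Finj\bigl(\tfrac{\tau-n\eps/2}{\eps}\bigr)$ do the work. Writing $u=\tau-\tfrac{n\eps}{2}$, the model reads $\mes(u)=\lf{\mes}(u)+\eps\overline{y_v}(u)\Finj(u/\eps)$, and the first-estimator result makes the slow parts cancel to high order: $\mes\bigl(\tau-\tfrac{n\eps}{2}\bigr)-\est{\lf{\mes}}(\tau)=\eps\overline{y_v}(u)\Finj(u/\eps)+\Landau(n^2\eps^2)$. Multiplying by $\Finj(u/\eps)$ converts the main term into $\eps\overline{y_v}(u)\Finj^2(u/\eps)$; splitting $\Finj^2=\langle\Finj^2\rangle+(\Finj^2-\langle\Finj^2\rangle)$ and applying the two estimates yields a windowed average equal to $\eps\langle\Finj^2\rangle\,\overline{y_v}(\slt)+\Landau(n\eps^2)$, while the residual $\Landau(n^2\eps^2)$ term, demodulated and averaged, is crudely still $\Landau(n^2\eps^2)$ and dominates the error. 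Dividing by $\eps\langle\Finj^2\rangle$ gives $\est{\overline{y_v}}(\slt)=\overline{y_v}(\slt)+\Landau(n^2\eps)$.

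The main obstacle is the accounting of orders in \emph{both} $n$ and $\eps$ under the $\tfrac1\eps$ prefactor of the second estimator: that prefactor amplifies every numerator error by $\eps^{-1}$, so I must confirm that the genuinely $\eps$-sized signal survives intact while the $\Landau(n^2\eps^2)$ reconstruction error inherited from the first estimator degrades to exactly $\Landau(n^2\eps)$ --- the claimed accuracy, and the dominant term. This is why the two structural facts cannot be replaced by crude bounds: one needs the whole-period cancellation both to make the denominator exactly constant and to kill the odd-order Taylor term, and the matched shift-and-demodulate pair to isolate $\overline{y_v}$ through the \emph{nonzero} mean $\langle\Finj^2\rangle$ rather than through $\Finj$ itself, whose mean vanishes.
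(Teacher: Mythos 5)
Your proposal is correct and follows essentially the same route as the paper's proof: second-order Taylor expansion of the slow signals $\lf{\mes}$, $\overline{y_v}$ (your midpoint rule), suppression of the oscillating integrals over whole periods via the bounded periodic primitive of the zero-mean injected signal (your integration-by-parts oscillation estimate), the exactly constant denominator $\lf{\Finj^2}$, and the same shift-and-demodulate treatment of the numerator in which the $\frac{1}{\eps}$ prefactor turns the $\Landau(n^2\eps^2)$ error inherited from the first estimator into the dominant $\Landau(n^2\eps)$ term. The only difference is packaging --- the paper carries explicit integral remainders and crude $\sup\abs{\Finj}$ bounds where you invoke two reusable lemmas --- but the mathematical content is the same.
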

		\begin{proof}
			The signals $\lf{\mes}$ and $\overline{y_v}$ are assumed sufficiently smooth and well-behaved; more precisely, we assume they can be written as Taylor series with integral remainder
			\barClno
				\lf{\mes}(\slt - \sigma) &=& \lf{\mes}(\slt) - \sigma\slderive{\lf{\mes}}(\slt) + \sigma^2\lf{\res{\y}{2}}(\slt, \sigma) \\
				\overline{y_v}(\slt - \sigma) &=& \overline{y_v}(\slt) - \sigma\slderive{\overline{y_v}}(\slt) + \sigma^2\hf{\res{\y}{2}}(\slt, \sigma),
			\earClno
			with $\lf{\res{\y}{2}}(\slt, \sigma)$ and $\hf{\res{\y}{2}}(\slt, \sigma)$ uniformly (with respect to~$t$) bounded for $\sigma \in [0, n\eps]$. Using these expressions, we find
			\barClno
				\est{\lf{\mes}}(t) 
				&=& \Avg{n\eps}{!} \lf{\mes}(\slt - \sigma) d\sigma + \eps\Avg{n\eps}{!} \overline{y_v}(\slt - \sigma)\Finj\Bigl(\frac{\slt - \sigma}{\eps}\Bigr) d\sigma \\
				&=& \lf{\mes}(\slt)\avg{n\eps}{!} d\sigma - \slderive{\lf{\mes}}(\slt)\avg{n\eps}{!} \sigma d\sigma \\
				& & + \Avg{n\eps}{!} \sigma^2\lf{\res{\y}{2}}(\slt, \sigma) d\sigma + \eps\overline{y_v}(\slt)\Avg{n\eps}{!} \Finj\Bigl(\frac{\slt - \sigma}{\eps}\Bigr) d\sigma \\
				& & - \eps\slderive{\overline{y_v}}(\slt)\avg{n\eps}{!} \sigma\Finj\Bigl(\frac{\slt - \sigma}{\eps}\Bigr) d\sigma \\
				& &+ \eps\avg{n\eps}{!} \sigma^2\hf{\res{\y}{2}}(\slt, \sigma)\Finj\Bigl(\frac{\slt - \sigma}{\eps}\Bigr) d\sigma \\
				&=& \lf{\mes}(\slt) - \frac{n\eps}{2}\slderive{\lf{\mes}}(\slt) + \frac{(n\eps)^2}{3}\Landau(1) \\
				& & + \eps^2 \FFinj(\slt) - \frac{n^2\eps^3}{3}\sup\limits_{\sigma\in[0, n\eps]}\bigl(\abs{S(\slt-\sigma)}\bigr)\Landau(1) \\
				&=& \lf{\mes}\Bigl(\slt - \frac{n\eps}{2}\Bigr) + \Landau\bigl(n^2\eps^2\bigr)
			\earClno
			where $\FFinj$ is the primitive of $\Finj$ with zero mean. Consequently, we have
			\barClno
				\frac{\mes\Bigl(\slt - \frac{n\eps}{2}\Bigr) - \est{\lf{\mes}}(\slt)}{\eps} &=& \overline{y_v}\Bigl(\slt - \frac{n\eps}{2}\Bigr)\Finj\Biggl(\frac{\slt - \frac{n\eps}{2}}{\eps}\Biggr) + \Landau\bigl(n^2\eps\bigr)
			\earClno
			which leads to 
			\barClno
				\lf{\Finj^2}\est{\overline{y_v}}(\slt) &=& \avg{n\eps}{\slt} \overline{y_v}\Bigl(\tau - \frac{n\eps}{2}\Bigr)\Finj\Biggl(\frac{\tau - \frac{n\eps}{2}}{\eps}\Biggr)^2 + \Landau\bigl(n^2\eps\bigr) \\
				&=& \avg{n\eps}{!} \overline{y_v}\Bigl(\slt - \sigma - \frac{n\eps}{2}\Bigr) \Finj\Biggl(\frac{\slt - \sigma - \frac{n\eps}{2}}{\eps}\Biggr)^2 d\sigma \\
				& & + \Landau\bigl(n^2\eps\bigr) \\
				&=& \overline{y_v}\Bigl(\slt - \frac{n\eps}{2}\Bigr) \avg{n\eps}{!} \Finj\Biggl(\frac{\slt - \sigma - \frac{n\eps}{2}}{\eps}\Biggr)^2 d\sigma \\
				& & - \slderive{\overline{y_v}}\Bigl(\slt - \frac{n\eps}{2}\Bigr) \avg{n\eps}{!} \sigma \Finj\Biggl(\frac{\slt - \sigma - \frac{n\eps}{2}}{\eps}\Biggr)^2 d\sigma \\
				& & + \Avg{n\eps}{!} \sigma^2 \hf{\res{\mes}{2}}\Bigl(\slt - \frac{n\eps}{2}, \sigma\Bigr) \Finj\Biggl(\frac{\slt - \sigma - \frac{n\eps}{2}}{\eps}\Biggr)^2 d\sigma \\
				& & + \Landau\bigl(n^2\eps\bigr) \\
				&=& \lf{\Finj^2}\overline{y_v}\Bigl(\slt - \frac{n\eps}{2}\Bigr) - \eps\slderive{\overline{y_v}}\Bigl(\slt - \frac{n\eps}{2}\Bigr)\FFinj_2\Bigl(\slt - \frac{n\eps}{2}\Bigr) \\
				& & + \frac{(n\eps)^2}{3}\sup\limits_{\sigma\in[0, n\eps]}\Bigl(\abs{S^2\Bigl(\slt - \sigma - \frac{n\eps}{2}\Bigr)}\Bigr)\Landau(1) \\
				& & + \Landau\bigl(n^2\eps\bigr)  \\
				&=& \lf{\Finj^2}\overline{y_v}(\slt) + \Landau(n^2\eps)
			\earClno
			where $\FFinj_2$ is the primitive of $\Finj^2$ with zero mean.
		\end{proof}
		
		\begin{rmk}
			The simpler formula
			\be\label{eqn:demod:ytilde:bad}
				\est{\overline{y_v}}(\slt) = \frac{1}{\eps} \frac{\avg{n\eps}{\slt} \mes(\tau)\Finj(\frac{\tau}{\eps}) d\tau}{\avg{n\eps}{\slt} \Finj(\frac{\tau}{\eps})^2 d\tau},\ase 
			\ee
			proposed in~\cite{JebaiMMR2012ICDC,JebaiMMR2015IJoC} by considering $\lf{\mes}$ and $\overline{y_v}$ are constant on one period $\eps$ of the high-frequency signal is less precise than~\eqref{eqn:demod:ytilde}, since valid only if $\lf{\mes}$ and $\overline{y_v}$ vary very slowly.
		\end{rmk}
		
	\subsection{Sensitivity to noise}
	\label{sec:demod:noise}
		As the virtual measurement estimate is scaled by a factor $\eps$ it may be more sensitive to noise than the original measurement. To study this issue, we assume the measurement noise $\nu$ is white with Power Spectral Density~$\Psd{\noise}$.  For simplicity, we moreover consider~\eqref{eqn:demod:ytilde:bad} instead of~\eqref{eqn:demod:ytilde}; it is nevertheless possible to conduct a similar analysis for~\eqref{eqn:demod:ytilde} at the cost of added technicalities. The additive noise~$\nu$ obviously creates additive noises on the estimates $\est{\lf{\mes}}$ and $\est{\overline{y_v}}$, denoted  respectively $\lf{\noise}$ and $\hf{\noise}$. Their PSDs are given by
		\barClno
			\Psd{\lf{\noise}}(\omega) &=& \Psd{\noise}(\omega) \abs{H(\jmath \omega)}^2 \\
			\Psd{\hf{\noise}}(\omega) &=& \frac{1}{\eps[2]\lf{\Finj^2}^2} \Psd{\Finj\noise}(\omega) \abs{H(\jmath \omega)}^2,
		\earClno
		where $H(\jmath \omega) := \frac{1 - e^{-\jmath n\eps\omega}}{\jmath n\eps\omega} = \Exp{-\jmath\frac{n\eps}{2}\omega}\Sinc{\frac{n \eps}{2}\omega}$ is the transfer function of the sliding average. It remains to compute $\Psd{\Finj \noise}$, i.e. the Fourier transform of the autocorrelation of $\Finj\bigl(\frac{\slt}{\eps}\bigr) \noise(\slt)$ which is non-stationary. The autocorrelation is
		\barClno
			R(\tau) &=& \lim\limits_{\Delta T \rightarrow \infty} \frac{1}{2 \Delta T} \int\limits_{-\Delta T}^{\Delta T} \Finj\Bigl(\frac{\slt}{\eps}\Bigr) \Finj\Bigl(\frac{t + \tau}{\eps}\Bigr) \noise(t)\noise(t + \tau) dt \\
				&=& \lim\limits_{\Delta T \rightarrow \infty} \frac{1}{2 \Delta T} \int_{-\Delta T}^{\Delta T} \Finj\Bigl(\frac{\slt}{\eps}\Bigr) \Finj\Bigl(\frac{t + \tau}{\eps}\Bigr) dt \\
				& & \lim\limits_{\Delta T \rightarrow \infty} \frac{1}{2 \Delta T} \int_{-\Delta T}^{\Delta T}\noise(t)\noise(t + \tau) dt \\
				&=& \Biggl(\avg{\eps}{!} \Finj\Bigl(\frac{\slt}{\eps}\Bigr) \Finj\Bigl(\frac{t + \tau}{\eps}\Bigr) dt\Biggr)\Psd{\noise} \delta(\tau) \\
				&=& \lf{\Finj^2}\Psd{\noise} \delta(\tau),
		\earClno
		since $\Finj$ and $\noise$ are independent. The signal $\noise(t)\Finj\bigl(\frac{\slt}{\eps}\bigr)$ thus behaves in average as a white noise with a reduced PSD. 
		As the cardinal sine function is bounded by the inverse function after one period, the gain of the sliding average over a time range $n\eps$ is bounded by the gain of a low-pass filter with bandwidth $2 \frac{2\pi}{n\eps}$. The estimators $\est{\lf{\mes}}$ and $\est{\overline{y_v}}$ thus have a built-in filtering effect. To decrease the influence of measurement noise, we can therefore
		\begin{itemize}
			\item increase the amplitude of the high-frequency oscillation~$s$, without exceeding~$\Landau(1)$
			\item average on a longer time by using a larger~$n$,  at the cost of a larger delay.
		\end{itemize}
		\begin{figure}
			\centering
			\setlength\figurewidth{6.5cm}
			\setlength\figureheight{2cm}
			\subcaptionbox{Due to its high frequency, the injected signal looks like a thick solid line.\label{fig:cmd:orig}}{\includegraphics{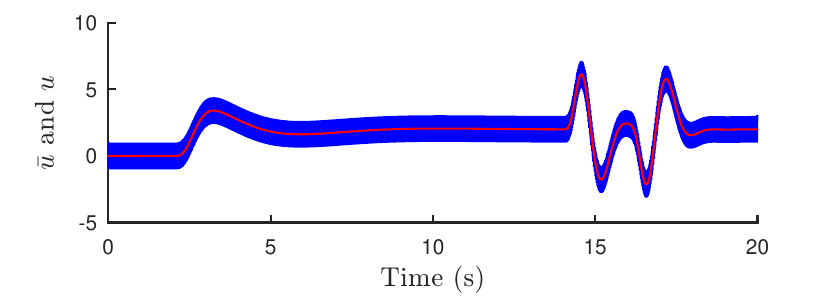}}
			\subcaptionbox{Zoom on fig.~\ref{fig:cmd:orig}, showing the injected signal is indeed a square wave. \label{fig:cmd:zoom}}{\includegraphics{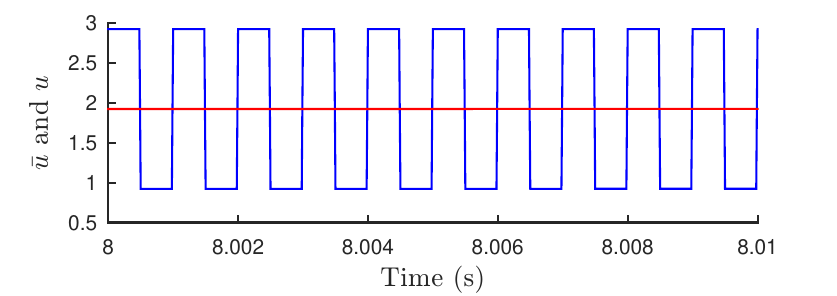}}
			\caption{The control with (blue solid line) and without (red solid line) high-frequency injection}
			\label{fig:cmd}
		\end{figure}
		\begin{figure}
			\centering
			\setlength\figurewidth{6.5cm}
			\setlength\figureheight{2cm}
			\subcaptionbox{The state variable $\ste[1]$ (green and blue lines), its estimate $\est{\overline{y_v}}$ (dashed magenta line) and its reference $\rf{{\ste[1]}}$ (dashed red line).\label{fig:state:x1}}{\includegraphics{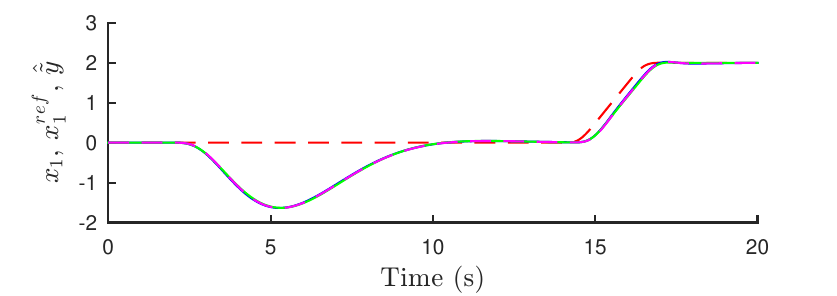}}
			\subcaptionbox{The state variable $\ste[2]$.\label{fig:state:x2}}{\includegraphics{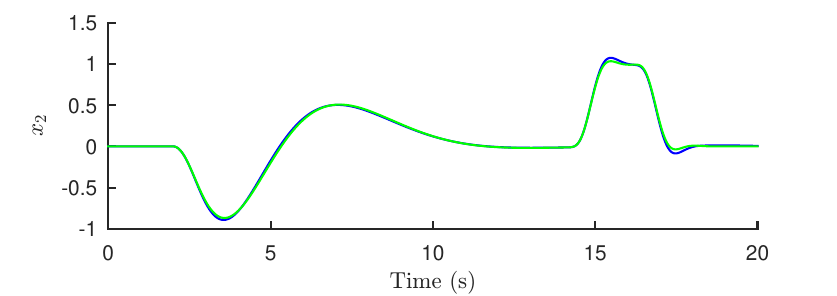}}
			\subcaptionbox{The state variable $\ste[3]$.\label{fig:state:x3}}{\includegraphics{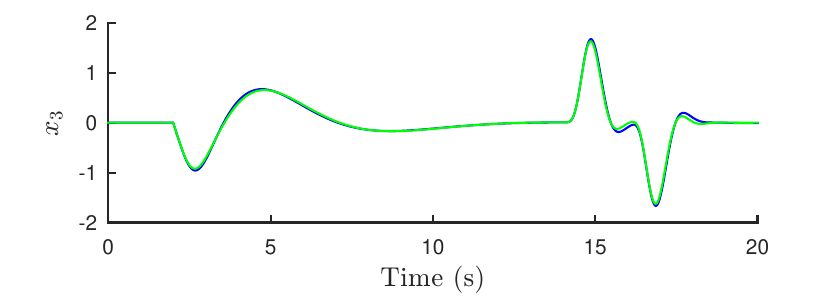}}
			\subcaptionbox{Zoom on fig.~\eqref{fig:state:x3}, showing the influence of high-frequency injection on the state variable $\ste[3]$. \label{fig:state:x3:zoom}}{\includegraphics{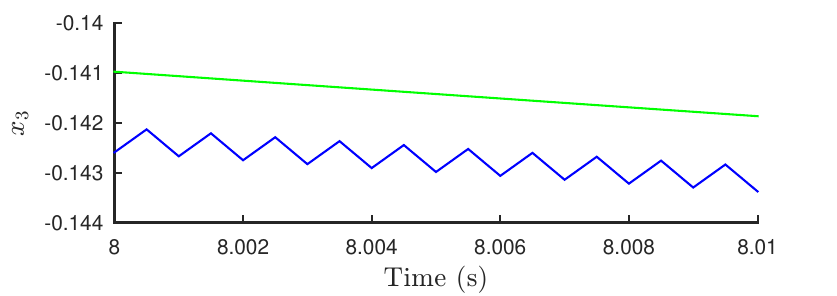}}
			\caption{The state of the system \eqref{eqn:basic:system} controlled using the real (solid green line) and the estimated (solid blue line) value of the virtual measurement.}
			\label{fig:state}
		\end{figure}
		\begin{figure}
			\centering
			\setlength\figurewidth{6.5cm}
			\setlength\figureheight{2cm}
			\subcaptionbox{The measurement $\mes$ (blue line) and its average $\est{\lf{\mes}}$ (red line) extracted using the demodulation procedure of section~\ref{sec:demod:extract}. \label{fig:mes:orig}}{\includegraphics{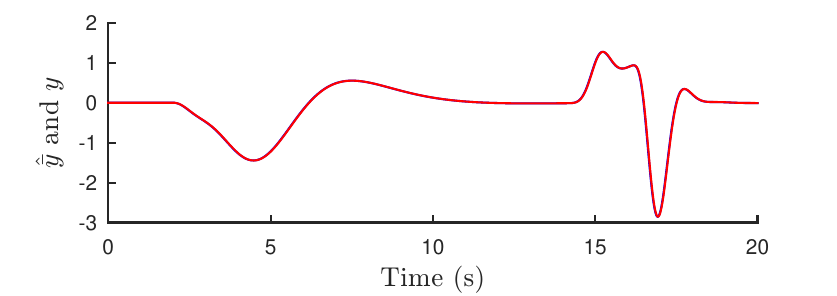}}
			\subcaptionbox{Zoom on fig.~\ref{fig:mes:orig}, showing the ripples caused by high-frequency injection on the measurement~$\mes$.\label{fig:mes:zoom}}{\includegraphics{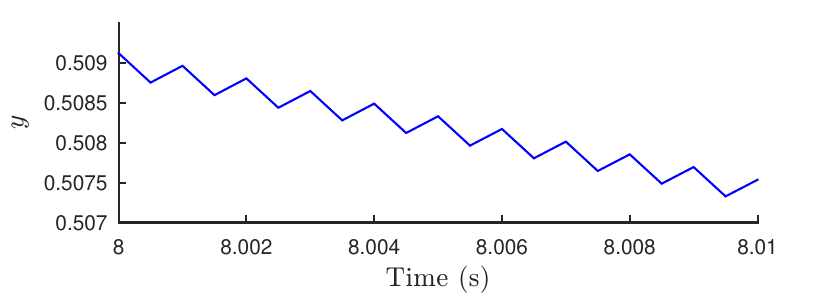}}
			\caption{Effect of injection on the measurement.}
			\label{fig:mes}
		\end{figure}			
\section{A worked example}
\label{sec:simu}
		As an example we consider the simple system
		\barCl\label{eqn:basic:system}
			\dot x_1 &=& x_2 \ase \\
			\dot x_2 &=& x_3 \ase \\
			\dot x_3 &=& u+d \ase \\ 
			y &=& x_2+x_1x_3, \ase 
		\earCl
		where $d$ is an unknown disturbance. The goal is to control the variable~$x_1$, while rejecting the disturbance, with time responses of about a few seconds. Moreover, we would like to operate around the equilibrium point $\bigl(\equ{\ste[1]}, \equ{\ste[2]}, \equ{\ste[3]}\bigr) = \bigl(\rf{\ste[1]}, 0, 0\bigr)$ determined by the reference constant~$\rf{\ste[1]}$;
		by definition, $u^{eq}+d^{eq}$ and all its derivatives are zero at this equilibrium point.
		However the system is clearly not observable at this point, since
		\begin{IEEEeqnarray*}{rCl'l}
			\pderive{\mes}{\ste} &=& \bigl(0, 1, \rf{\ste[1]}\bigr)\\
			\pderive{\dot{\mes}}{\ste} &=& (0, 0, 1)\\
			\pderive{{\mes^{(k)}}}{\ste} &=& (0, 0, 0), &k\ge2,
		\end{IEEEeqnarray*}
hence $\ste[1]$ can not be recovered from the output and its derivatives.

The control objective can nevertheless be met rather easily with the method proposed in this paper. Indeed, the virtual output is
	\begin{IEEEeqnarray*}{C}
		y_v = \begin{pmatrix}x_3 & 1 & x_1\end{pmatrix}\begin{pmatrix}0\\ 0\\ 1\end{pmatrix}=x_1.
	\end{IEEEeqnarray*}
With this new measurement, the system is completely linear and can be easily controlled, without using the original measurement~$\mes$. We use a classical controller-observer design, with an estimation of the perturbation by the observer to yield an implicit integral effect. The observer reads
		\begin{IEEEeqnarray*}{rCl}
			\dot{\hat x}_1 &=& \hat x_2 + l_1(\vmes - \hat x_1)\\
			\dot{\hat x}_2 &=& \hat x_3 + l_2(\vmes - \hat x_1)\\
			\dot{\hat x}_3 &=& u+\hat d + l_3(\vmes - \hat x_1)\\
			\dot{\hat d} &=& l_d(\vmes - \hat x_1),\label{eq:exoutput}
		\end{IEEEeqnarray*}
		and the controller 
		\begin{IEEEeqnarray*}{rCl}
			u &=& -k_1\hat x_1-k_2\hat x_2-k_3\hat x_3-k_d\hat d+kx_1^{ref}.
		\end{IEEEeqnarray*}
The gains are chosen such that the eigenvalues of the observer are $(-1.31,-0.80,-0.54\pm0.63i)$
and those of the controller are $(-6.06,-3.03\pm5.25i)$, which ensures a time response in disturbance rejection of a few seconds and a reasonable robustness; the controller is faster than the observer, in accordance with dual Loop Transfer Recovery (recovery at the plant output). Setting $\eta:=(\hat x_1,\hat x_2,\hat x_3,\hat d)^T$, this controller-observer can obviously be written as
		\begin{IEEEeqnarray}{rCl}\label{eqn:excontroller}
			u &=& -K\eta+kx_1^{ref}\ase\\
			\dot\eta &=& M\eta+Nx_1^{ref}+Ly_v,\ase
		\end{IEEEeqnarray}
which is indeed a particular form of~\eqref{eqn:controller}. Following section~\ref{sec:averaging}, we now modify~\eqref{eqn:excontroller} into
		\begin{IEEEeqnarray*}{rCl}
			u &=& -K\eta+kx_1^{ref}+\finj\Bigl(\frac{\slt}{\eps}\Bigr)\\
			\dot\eta &=& M\eta+Nx_1^{ref}+L\est{\overline{y_v}},
		\end{IEEEeqnarray*}
where  $\est{\overline{y_v}}$ is obtained from the actual measurement~\eqref{eq:exoutput} thanks to the estimator~\eqref{eqn:demod:ytilde}; this modified controller then has the form~\eqref{eqn:controllerModified}. We choose for the injected signal~$\finj$ a square wave of amplitude~$1$ and frequency~$1kHz$ (i.e. $\eps:= 10^{-3}$), which ensures the oscillation is fast with respect to the time constants of the closed-loop system; $n:=10$ is used in the demodulating filter.

The test scenario is the following: at $t=0$, the system starts at rest at the origin; at $t=2$, a disturbance $d$ of magnitude~$-2$ is applied; at $t=14$ the reference $x_1^{ref}$ is set to a ramp with slope~$1$, filtered by a low-pass filter with a $1Hz$
 bandwidth. Notice that the virtual output method relies on a truly nonlinear effect, though everything appears here to be linear. It is not possible to track fast and large references, since the estimators~\eqref{eqn:demod:ybar}-\eqref{eqn:demod:ytilde} assume reasonably slowly-varying signals.
	In fig.~\ref{fig:state} we compare the performance of the controller fed with the estimate $\est{\overline{y_v}}$ of the virtual output and fed with the (unavailable)
true variable~$x_1$. It is nearly impossible to distinguish the two situations on the responses of $x_1$ and $x_2$, since the error is~$\Landau(\eps^2)$; the $\Landau(\eps)$ ripples are only visible on~$x_3$. In fig.~\ref{fig:cmd} the control is shown; the frequency of the injection signal is so high that it looks like a thick solid line, but by zooming see that it is indeed a square wave. Finally, fig.~\ref{fig:mes} shows the $\Landau(\eps)$ ripples in the measurement, from which the virtual measurement is obtained.
		To investigate the sensitivity of the method to measurement noise, we now superimpose on the measurement $\mes$ a band-limited white noise with sample time $2\!\times\!10^{-5}$ and power $2\!\times\!10^{-11}$. Fig.~\ref{fig:state:noise} shows how this noise affects the virtual measurement and the controlled output $\ste[1]$. We check the results of section \ref{sec:demod:noise} by comparing them to what would be obtained with the real value of $\ste[1]$ with an equivalent noise. As can be seen in fig.~\ref{fig:mes:noise}, the ripples are buried into the noise, yet the behavior remains satisfactory.

This example demonstrates the relevance of the virtual output approach, and its good behavior even when operating near the non-observability region.

	\begin{figure}
		\centering
		\setlength\figurewidth{6.5cm}
		\setlength\figureheight{2cm}
		\includegraphics{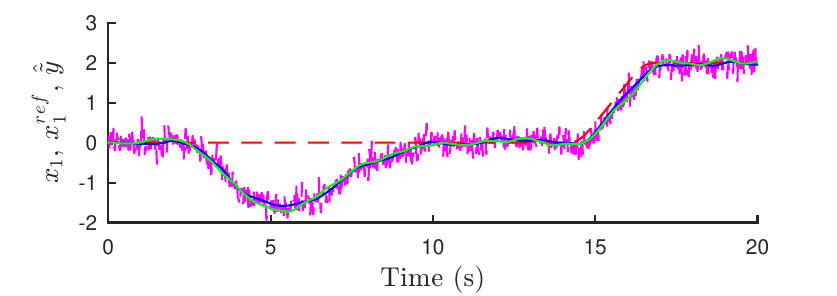}
		\caption{The state variable $\ste[1]$ controlled using the virtual measurement or the real value of $\ste[1]$; its estimate $\est{\overline{y_v}}$ (dashed magenta line), and its reference $\rf{{\ste[1]}}$ (dashed red line).}
		\label{fig:state:noise}
	\end{figure}
	
	\begin{figure}
		\centering
		\setlength\figurewidth{6.5cm}
		\setlength\figureheight{2cm}
		\subcaptionbox{The measurement $\mes$ with (green line) and without (blue line) noise, and its average $\est{\lf{\mes}}$ (red line) extracted using the demodulation procedure of~\ref{sec:demod:extract}. The effects of noise and high-frequency injection are barely visible at this scale. \label{fig:mes:noise:orig}}{\includegraphics{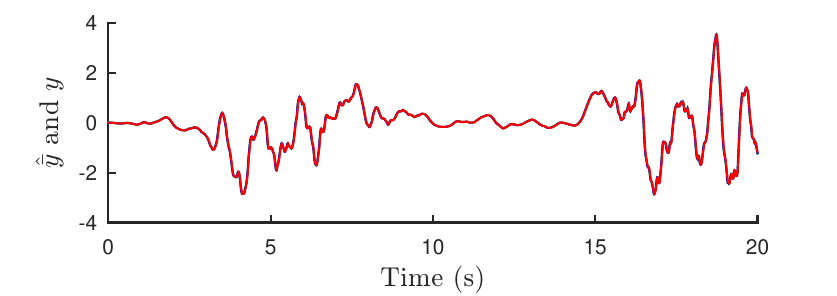}} 
		\subcaptionbox{Zoom on figure \ref{fig:mes:noise:orig}, illustrating the approach works even though the ``useful'' ripples are buried in the measurement noise. \label{fig:mes:noise:zoom}}{\includegraphics{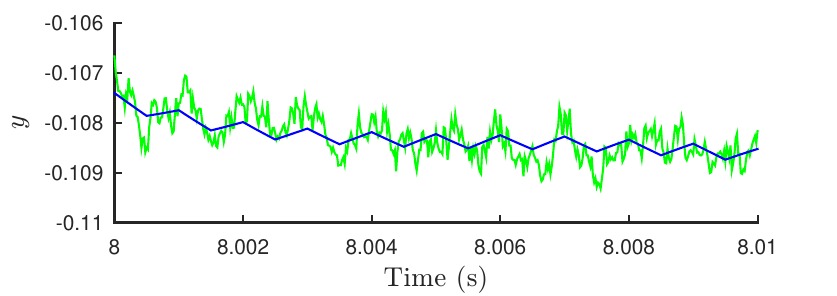}}
		\caption{$\mes$, with (green line) and without (blue line) noise, and the estimate $\est{\lf{\mes}}$ (red line).}
		\label{fig:mes:noise}
	\end{figure}
\addtolength{\textheight}{-7.35cm}   
\section*{appendix: second order averaging for exponentially stable systems}
The approximations given by first- and second-order averaging are a priori valid only  on the timescale $\frac{1}{\eps}$. However, with the additional assumption of exponential stability of the averaged system, they can be continued to infinity. This is well-known for first-order averaging, see e.g. \cite[Theorem 5.5.1]{Sanders2005}, recalled below.
		\begin{lem}
			Consider the two systems
			\barCl\label{eqn:first}
				\slderive{\st} &=& \eps F_1(\st, \slt) \ase \label{eqn:first:orig} \\
				\slderive{\sttrc} &=& \eps G_1(\sttrc) \ase \label{eqn:first:avg},
			\earCl
			where $F_1$ is $T$-periodic with respect to $\slt$ and $G_1$ is its average on one period. 
			We suppose that the origin is an exponentially stable equilibrium for \eqref{eqn:first:avg}. Then, there exists a compact neighborhood $\mathcal{V}$ of the origin, such that $\forall \zero{\sttrc} \in \mathcal{V}$ the solution $\sttrc(\slt)$ of \eqref{eqn:first:avg} with initial condition $\zero{\sttrc}$ at $\zero{\slt}$ is valid on $[\zero{\slt}, +\infty[$.
			Besides, there exists $\zero{\eps}>0$ such that $\forall \eps \in [0, \zero{\eps}[$, the solution $\st(\slt)$ of \eqref{eqn:first:orig} with initial condition $\zero{\st} = \zero{\sttrc}$ at $\zero{\slt}$ is valid on $[\zero{\slt}, +\infty[$ and $\exists C \in \mathbb{R}^+$ such that
			\beno
				\sup\limits_{\slt \in [\zero{\slt}, +\infty[} \norm{\st(\slt) - \sttrc(\slt)} < C\eps.
			\eeno
		\end{lem}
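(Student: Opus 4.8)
The plan is to recognize this as the classical first-order averaging theorem with an exponentially stable averaged system (precisely the result quoted as \cite[Theorem 5.5.1]{Sanders2005}), and to assemble its proof from two ingredients: the finite-horizon averaging estimate, valid on the timescale $\frac{1}{\eps}$, and a Lyapunov argument that uses exponential stability to push that estimate out to all of $[\zero\slt, +\infty[$. Without loss of generality I would take $\zero\slt = 0$.

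First I would record the two technical tools. Since the origin is an exponentially stable equilibrium of the autonomous averaged field $G_1$ and $G_1$ is $C^1$ (which holds in our application, where $G_1$ is built from the $\mathcal C^2$ maps $\fa,\fb,\fc$), a converse Lyapunov theorem supplies a function $V$ on a ball $\Ball{0}{\rho}$ with $c_1\norm{\sttrc}^2 \le V(\sttrc) \le c_2\norm{\sttrc}^2$, $\nabla V(\sttrc)\cdot G_1(\sttrc) \le -c_3\norm{\sttrc}^2$, and $\norm{\nabla V(\sttrc)} \le c_4\norm{\sttrc}$. Second, I would apply the near-identity averaging transformation $\st = \xi + \eps w(\xi,\slt)$, with $w(\xi,\slt) := \int_0^\slt\bigl[F_1(\xi,\tau) - G_1(\xi)\bigr]d\tau$; because $G_1$ is the mean of the $T$-periodic field $F_1$, the generator $w$ is $T$-periodic and bounded, and the change of variables turns \eqref{eqn:first:orig} into an $\eps^2$-perturbation of the averaged system, $\dot\xi = \eps G_1(\xi) + \eps^2 R(\xi,\slt,\eps)$, with $R$ uniformly bounded (say by $M$) on compact $\xi$-sets.

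Next I would run the Lyapunov/invariance argument on the $\xi$-system. Differentiating $V$ along $\xi$ gives $\slderive{V} \le -\eps c_3\norm{\xi}^2 + \eps^2 c_4 M\norm{\xi}$, so $V$ strictly decreases outside an $\Landau(\eps)$ ball; hence a suitable sublevel set of $V$ is positively invariant, the solution $\xi(\slt)$ exists for all $\slt\ge 0$, and $\norm{\xi(\slt)}$ is ultimately $\Landau(\eps)$. I would then perform the standard ``restart'': choosing $L$ large enough that the averaged solution has $\norm{\sttrc(L/\eps)} \le \delta/2$ (possible since $\norm{\sttrc(\slt)} \le K\norm{\zero\sttrc}e^{-\eps\lambda\slt}$ for some $K,\lambda>0$), the finite-horizon averaging estimate gives $\norm{\xi(L/\eps) - \sttrc(L/\eps)} = \Landau(\eps)$, so $\xi(L/\eps)$ lies in the small positively invariant neighborhood of the origin. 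From there I would bound the difference $e := \xi - \sttrc$, which obeys $\slderive{e} = \eps DG_1(\sttrc)e + \Landau(\eps\norm{e}^2) + \Landau(\eps^2)$, using the Hurwitz linearization $DG_1(0)$ to show that the contracting homogeneous part absorbs the $\eps^2$ forcing into an $\Landau(\eps)$ discrepancy for all $\slt\ge L/\eps$. Combining this with the finite-horizon estimate on $[0,L/\eps]$ and with $\norm{\st-\xi}\le \eps\norm{w}_\infty$ would yield $\sup_{\slt\ge0}\norm{\st(\slt)-\sttrc(\slt)} < C\eps$, after shrinking $\mathcal V$ so that every admissible initial condition maps into the region where the estimates hold and fixing $\zero\eps$ accordingly.

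The hard part will be the uniform-in-time comparison: the basic averaging theorem controls $\norm{\st - \sttrc}$ only on $[0, L/\eps]$, and the whole content of the lemma is that exponential stability prevents the $\Landau(\eps)$ error from accumulating past that horizon. Making the ``restart'' rigorous — certifying that the entire transient of $\xi$ stays inside $\Ball{0}{\rho}$ where the converse-Lyapunov bounds are valid, and that the $\eps^2$ forcing integrated against the contracting linearized flow contributes only an $\Landau(\eps)$ error uniformly in $\eps$ — is where the care is required; once those two estimates are in hand, the triangle inequality closes the argument.
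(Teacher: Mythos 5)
Your proof outline is correct, but note that the paper does not actually prove this lemma: it is recalled verbatim from the literature as \cite[Theorem 5.5.1]{Sanders2005}, and the appendix only proves its second-order extension, Lemma~\ref{thm:lem1}. Your reconstruction --- converse Lyapunov function for the averaged field, near-identity change of variables $\st = \xi + \eps w(\xi,\slt)$ turning \eqref{eqn:first:orig} into an $\eps^2$-perturbation of \eqref{eqn:first:avg}, positive invariance of a sublevel set, then a restart at $\slt = \zero{\slt}+L/\eps$ after which the exponential decay of $\sttrc$ takes over --- is the standard Lyapunov route to this theorem (essentially the proof given in Khalil's \emph{Nonlinear Systems}), and the two points you flag as delicate are the right ones; both close by a bootstrap, since $\norm{\xi(\zero{\slt}+L/\eps)-\sttrc(\zero{\slt}+L/\eps)}=\Landau(\eps)$ and the differential inequality for a quadratic form in the error keeps it $\Landau(\eps)$ for $\eps$ small enough. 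The paper's own technique, deployed for the second-order analogue, is genuinely different and Lyapunov-free: it partitions $[\zero{\slt},+\infty[$ into segments $I_m$ of length $L/\eps$, restarts on each segment an averaged solution $\sttrc_m$ matched to the true solution, applies the finite-horizon estimate segment by segment, invokes the contraction of the exponentially stable averaged flow (\cite[Lemma 5.2.7]{Sanders2005}) to get $\norm{\sttrc - \sttrc_m}_{I_m} \leq \kappa \norm{\sttrc - \sttrc_m}_{I_{m-1}}$ with $\kappa$ arbitrarily small, and sums a geometric series. Your approach is self-contained and quantitative but needs $G_1$ smooth enough for a converse Lyapunov theorem and careful handling of the nonlinear remainder after linearization; the paper's segmentation argument avoids Lyapunov functions entirely and extends unchanged to higher-order approximations, which is exactly why the authors develop it for Lemma~\ref{thm:lem1}. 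If you wanted to align with the paper's machinery, you would replace your post-restart linearization step by that segment-by-segment contraction recursion.
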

		
		The following lemma, which does not seem to exist in the literature, extends this result to the case of second-order averaging.
		\begin{lem}\label{thm:lem1}
			Consider the two systems
			\barCl\label{eqn:second}
				\slderive{\st} &=& \eps F_1(\st, \slt) + \eps^2F_2(\st, \slt) \ase \label{eqn:second:orig} \\
				\slderive{\sttrc} &=& \eps G_1(\sttrc) + \eps^2G_2(\sttrc) \ase \label{eqn:second:avg}
			\earCl
			where $F_1$ and $F_2$ are $T$-periodic with respect to $\slt$ and $G_1$ and $G_2$ are obtained as in section \ref{sec:averaging}.
			We suppose that the origin is an exponentially stable equilibrium for \eqref{eqn:first:avg}. Then, there exists a compact neighborhood $\mathcal{V}$ of the origin and $\zero{\eps}>0$ such that $\forall \eps \in [0, \zero{\eps}[$ and $\forall \zero{\sttrc} \in \mathcal{V}$, the solution $\sttrc(\slt)$ of \eqref{eqn:second:avg} with initial condition $\zero{\sttrc}$ at $\zero{\slt}$ and the solution $\st(\slt)$ of \eqref{eqn:second:orig} with initial condition $\zero{\st} = \zero{\sttrc} + \eps\psid[1](\zero{\sttrc}, 0)$ at $\zero{\slt}$ are valid on $[\zero{\slt}, +\infty[$ and $\exists C \in \mathbb{R}^+$ such that
			\beno
				\sup\limits_{\slt \in [\zero{\slt}, +\infty[} \norm{\st(\slt) - \sttrc(\slt) - \eps\psid[1](\sttrc, \slt)} < C\eps^2,
			\eeno
			where $\psid[1]$ is defined as in \ref{sec:averaging}.
		\end{lem}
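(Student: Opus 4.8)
The plan is to carry the strategy behind the first-order lemma above (i.e. \cite[Theorem 5.5.1]{Sanders2005}) one order further in $\eps$. The first move is to strip the fast time dependence out of \eqref{eqn:second:orig} by a near-identity transformation taken to second order: set $x = \xi + \eps W(\xi, t) + \eps^2 W_2(\xi, t)$, where $W$ is the map already defined in section~\ref{sec:averaging} and $W_2$ is the analogous zero-mean (in $t$) primitive of $K_2 - G_2$, with $K_2$ the intermediate quantity from section~\ref{sec:averaging}. Expanding $F_1,F_2$ about $\xi$, inverting $I + \eps\partial_1 W + \eps^2(\ldots)$, and using $\partial_t W = F_1 - G_1$, one checks that $\xi$ obeys $\dot\xi = \eps G_1(\xi) + \eps^2 G_2(\xi) + \eps^3 \tilde R(\xi, t, \eps)$ with $\tilde R$ being $T$-periodic in $t$ and bounded uniformly on compact sets. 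The sole purpose of including the $\eps^2 W_2$ term is to cancel the oscillatory part $K_2 - G_2$ of the $\eps^2$ contribution; were it left in, an $\Landau(\eps^2)$ zero-mean mismatch would survive and, against an $\Landau(\eps)$ contraction rate, would degrade the target $\Landau(\eps^2)$ bound. Thus the transformed system differs from the averaged system \eqref{eqn:second:avg} only at order $\eps^3$.

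Next I would establish positive invariance of a neighborhood, uniformly in $\eps$. In the slow time $\tau = \eps t$ the first-order field $G_1$ has an exponentially stable equilibrium at the origin, so a converse Lyapunov theorem supplies $V$ with $c_1\|z\|^2 \le V(z) \le c_2\|z\|^2$, $\partial V(z) G_1(z) \le -c_3\|z\|^2$ and $\|\partial V(z)\| \le c_4\|z\|$ on a ball $\mathcal B_r$. Along \eqref{eqn:second:avg} one finds $\dot V \le -\eps c_3\|z\|^2 + C_2\eps^2\|z\|$, and the same bound with an extra $C_3\eps^3\|z\|$ term along the $\xi$-equation. For $\eps$ small the negative term dominates outside an $\Landau(\eps)$ ball, so a sublevel set of $V$ inside $\mathcal B_r$ is positively invariant and traps both $z(t)$ and $\xi(t)$ for all $t \ge \zero{\slt}$. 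Taking $\mathcal V$ to be a slightly smaller sublevel set then keeps the trajectories inside the region where the Lyapunov estimates are valid.

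The core is the error estimate. With $e := \xi - z$ the dynamics read $\dot e = \eps[G_1(\xi) - G_1(z)] + \eps^2[G_2(\xi) - G_2(z)] + \eps^3 \tilde R(\xi, t)$, and the prescribed initial condition $\zero{\st} = \zero{\sttrc} + \eps W(\zero{\sttrc}, 0)$ forces $\xi(\zero{\slt}) = \zero{\sttrc}$, i.e. $e(\zero{\slt}) = 0$, the near-identity map being invertible for small $\eps$. Using $P$ from the Lyapunov equation for $A_0 := DG_1(0)$ and the quadratic form $e^\top P e$, the $C^1$ smoothness of $G_1$ together with $z,\xi \in \mathcal B_r$ gives $\frac{d}{dt}(e^\top P e) \le -\eps \lambda \|e\|^2 + C'\eps^3\|e\|$, once $r$ is small enough to absorb the nonlinear and $\eps^2$ terms. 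A Gronwall/comparison argument with $e(\zero{\slt}) = 0$ then yields $\|e(t)\| \le C\eps^2$ uniformly on $[\zero{\slt},+\infty[$; the decisive scaling is that an $\Landau(\eps^3)$ forcing against an $\Landau(\eps)$ contraction produces an $\Landau(\eps^2)$ deviation. Finally, since $x = \xi + \eps W(\xi,t) + \eps^2 W_2(\xi,t)$ with $W$ Lipschitz in its first argument, $\|x(t) - z(t) - \eps W(z,t)\| \le \|e\| + \eps L\|e\| + \eps^2\|W_2\| = \Landau(\eps^2)$, which is the assertion.

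The main obstacle is this last step: converting exponential stability of the origin — a statement about one trajectory approaching the equilibrium — into an \emph{incremental} contraction estimate for the difference of two neighbouring trajectories, valid uniformly over the infinite horizon and with all constants independent of $\eps$ once expressed in slow time. This is precisely where the $C^1$ regularity of $G_1$ and the converse Lyapunov function are indispensable, and it is also where the uniform invariance of the previous step, keeping both $z(t)$ and $\xi(t)$ in $\mathcal B_r$, enters essentially.
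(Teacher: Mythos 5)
Your proof is essentially correct, but it takes a genuinely different route from the paper's. The paper keeps only the first-order transformation $\psid[1]$ and invokes the \emph{finite-time} second-order averaging theorem \cite[Theorem 2.9.2]{Sanders2005} on a partition of $[\zero{\slt},+\infty[$ into blocks $I_m$ of length $L/\eps$: on each block the finite-time estimate gives an $\Landau(\eps^2)$ error, exponential stability enters only through the contraction lemma \cite[Lemma 5.2.7]{Sanders2005} applied to averaged solutions restarted on consecutive blocks, and the uniform bound follows from a geometric-series recursion with ratio $\kappa'<1$. You instead push the normal form one order further (introducing $W_2$ to cancel the oscillatory part of $K_2-G_2$ --- and you correctly identify that without this cancellation a naive Gronwall bound would only give $\Landau(\eps)$), and then run a single infinite-horizon Lyapunov/Gronwall estimate in which $\Landau(\eps^3)$ forcing against $\Landau(\eps)$ contraction yields $\Landau(\eps^2)$. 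Your approach is more self-contained (no block bookkeeping, no appeal to the finite-time theorem as a black box on each block), at the price of constructing $W_2$ and of a smaller validity region: your incremental contraction rests on the linearization $DG_1(0)$, so $\mathcal{V}$ must lie in a ball where $DG_1$ stays close to $DG_1(0)$, whereas the paper's $\mathcal{V}$ is \emph{any} compact subset of the domain of attraction --- which is the version actually invoked in the proof of Theorem~\ref{thm:main}. To recover that stronger statement within your framework, prepend one application of the finite-time averaging theorem to carry the trajectory from the given compact set into your small ball, then let your contraction argument take over.

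Two minor slips, neither fatal. First, under your second-order transformation $\st = \stapp + \eps\psid[1](\stapp,\slt) + \eps^2 W_2(\stapp,\slt)$, the prescribed initial condition $\zero{\st} = \zero{\sttrc} + \eps\psid[1](\zero{\sttrc},0)$ gives $\stapp(\zero{\slt}) = \zero{\sttrc} + \Landau(\eps^2)$, not $\stapp(\zero{\slt}) = \zero{\sttrc}$ exactly, because of the $\eps^2 W_2$ term; this is harmless, since an $\Landau(\eps^2)$ initial error propagates as $\Landau(\eps^2)$ through your contraction estimate, but the claim $e(\zero{\slt})=0$ as stated is false. Second, $G_2(0)$ need not vanish, so neither \eqref{eqn:second:avg} nor the transformed system has an equilibrium exactly at the origin; your invariance argument already accommodates this ($\slderive{V}<0$ only outside an $\Landau(\eps)$ ball), but the chosen sublevel set must be large enough to contain that $\Landau(\eps)$ ball, uniformly as $\eps\to 0$.
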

		\begin{proof}
			$\mathcal{V}$ is chosen as a compact included in the domain of attraction of the origin. Then, $\exists \eps_1 > 0$ such that $\forall \eps \in [0, \eps_1[$ \eqref{eqn:second:avg} admits a solution on $[\zero{\slt}, +\infty[$. Besides, we know from \cite[Lemma~2.9.1]{Sanders2005} that \eqref{eqn:second:orig} can be transformed into 
			\beno
				\slderive{\stavg} = \eps G_1(\stavg) + \eps^2G_2(\stavg)
			\eeno
			by the change of variables $\st = \stavg + \eps\psid[1](\stavg)$, and thus, $\exists \eps_2 > 0$ such that $\forall \eps \in [0, \eps_2[$ \eqref{eqn:second:orig} admits a solution on $[\zero{\slt}, +\infty[$.
			
			We call $\stapp(\slt) = \sttrc(\slt) + \eps\psid[1](\sttrc, \slt)$. We know from \cite[Lemma~2.9.2]{Sanders2005} that $\exists L \in \mathbb{R}^+$ such that
			\beno
				\forall \slt \in \Bigl[\zero{\slt}, \zero{\slt}+\frac{L}{\eps}\Bigr[ \quad \norm{\st(\slt) - \stapp(\slt)} < C\eps^2.
			\eeno
			We partition the time into segments of length $\frac{L}{\eps}$
			\beno
				\bigcup_{m\in \mathbb{N}} I_m = \bigcup_{m\in \mathbb{N}} \Bigl[\zero{\slt} + m\frac{L}{\eps}, \zero{\slt} + (m + 1)\frac{L}{\eps}\Bigr].
			\eeno
			On each segment $I_m$ we define $\sttrc_m$ as the solution of the truncated averaged equation \eqref{eqn:second:avg} with initial condition such that $\st_m\bigl(\zero{\slt} + m\frac{T}{\eps}\bigr) = \sttrc_m\bigl(\zero{\slt} + m\frac{T}{\eps}\bigr) + \eps\psid[1]\Bigl(\sttrc_m\bigl(\zero{\slt} + m\frac{T}{\eps}\bigr), \slt\Bigr)$ and also $\stapp_m := \sttrc_m + \eps\psid[1](\sttrc_m, \slt)$.
			We note $\norm{\cdot}_{I_m} = \sup\limits_{t \in I_m} \norm{\cdot}$.
			
			$m\in \mathbb{N}$ is now fixed. From the theorem of second-order averaging \cite[Theorem 2.9.2]{Sanders2005} $\exists k$ such that
			\beno
				 \quad \norm{\stapp_m - \st}_{I_m} \leq k\eps[2]
			\eeno
			and form the definition of the truncated pseudo-identity transformation $\mathcal{W}(\sttrc, \slt) := \sttrc + \eps\psid[1](\sttrc, \slt)$, $\exists \lambda$ such that
			\barClno
				\norm{\stapp - \stapp_m}_{I_m} &=& \norm{\mathcal{W}(\sttrc, \slt) - \mathcal{W}(\sttrc_m, \slt)}_{I_m} \\
				&\leq& (1 + \eps\lambda)\norm{\sttrc - \sttrc_m}_{I_m} \\
				\norm{\sttrc - \sttrc_m}_{I_m} &=& \norm{\mathcal{W}^{-1}(\stapp, \slt) - \mathcal{W}^{-1}(\stapp_m, \slt)}_{I_m} \\
				&\leq& (1 + \eps\lambda)\norm{\stapp - \stapp_m}_{I_m}.
			\earClno
			Besides, \cite[Lemma 5.2.7]{Sanders2005} implies
			\beno
				\norm{\sttrc - \sttrc_m}_{I_{m}} \leq \kappa \norm{\sttrc\Bigl(\zero{\slt} + m\frac{L}{\eps}\Bigr) - \sttrc_m\Bigl(\zero{\slt} + m\frac{L}{\eps}\Bigr)},
			\eeno
			which can be rewritten by prolonging $\sttrc_m$ on $I_{m-1}$
			\beno
				\norm{\sttrc - \sttrc_m}_{I_{m}} \leq \kappa \norm{\sttrc - \sttrc_m}_{I_{m - 1}},
			\eeno
			where $\kappa$ can be made as small as desired by taking $\eps$ small enough. By applying the triangle inequality, we find
			\barClno
				\norm{\stapp - \st}_{I_m} &\leq& \norm{\stapp - \stapp_m}_{I_m} + \norm{\stapp_m - \st}_{I_m} \\
										  &\leq& (1 + \eps\lambda)\norm{\sttrc - \sttrc_m}_{I_m} + k\eps[2] \\
										  &\leq& (1 + \eps\lambda)\kappa\norm{\sttrc - \sttrc_m}_{I_{m - 1}} + k\eps[2] \\
										  &\leq& (1 + \eps\lambda)\kappa(1 + \eps\lambda)\norm{\stapp - \stapp_m}_{I_{m - 1}} + k\eps[2] \\
										  &\leq& \kappa'\norm{\stapp - \st}_{I_{m - 1}} + k(1 + \kappa')\eps[2]
			\earClno
			where $\kappa' = (1 + \eps\lambda)\kappa(1 + \eps\lambda) < 1$ for $0 \leq \eps < \eps_3$.
			
			Then with a simple recursion we obtain
			\beno
				\norm{\stapp - \st}_{I_m} \leq {\kappa'}^{m}\norm{\stapp(\zero{\slt}) - \st(\zero{\slt})} + \Biggl(\sum\limits_{n=0}^{m}{\kappa'}^n\Biggr)(1 + \kappa')k\eps^2.
			\eeno
			Using the fact that $\stapp(\zero{\slt}) = \st(\zero{\slt})$ and that the sum is monotonically increasing, we obtain that
			\beno
				\norm{\stapp - \st}_{I_m} \leq \frac{1 + \kappa'}{1 - \kappa'}k\eps[2].
			\eeno
			Finally, as the previous equation is valid for all $m \in \mathbb{N}$, we find that $\forall \slt \in [\zero{\slt}, \infty[$
			\beno
				\norm{\stapp(\slt)- \st(\slt)} \leq \frac{1 + \kappa'}{1 - \kappa'}k\eps[2] =: C\eps[2].
			\eeno
			Taking $\eps_0 := \min(\{\eps_1, \eps_2, \eps_3\})$, completes the proof.
		\end{proof}
		
		Even though we do not prove it here, we suppose that this result is still valid for higher-order averaging approximations.

\bibliographystyle{phmIEEEtran}
\bibliography{biblio}

\end{document}